\date{}
\begin{document}

\title{\vspace{-38pt}
\bf Three solutions for a new Kirchhoff-type problem\footnote{Supported by the NSF of China ( 11661021,11861021).}}
\author{
{Yue Wang$^1$%,gs.wangyue19@gzu.edu.cn
, Hong-Min Suo$^2$\footnote{Corresponding author. E-mail:eztf@qq.com.
}
}\\[8pt] E-mail: yuewn@sina.cn\\
{\small \emph{1. School of Mathematics and Statistics,
Guizhou University, }}\\
{\small \emph{Guiyang {\rm 550025}, People's Republic of China}}\\
{\small \emph{2. School of Data Science and Information Engineering,
Guizhou Minzu University, }}\\
{\small \emph{Guiyang {\rm 550025}, People's Republic of China}}
}

\maketitle
\numberwithin{equation}{section}
\newtheorem{theorem}{Theorem}[section]
\newtheorem{lemma}[theorem]{Lemma}
\newtheorem{remark}[theorem]{Remark}
\newtheorem{corollary}[theorem]{Corollary}

\baselineskip 16pt%行间距
\vspace{-27pt}
\begin{center}
\begin{minipage}[c]{15.4cm}
\hspace{0em}\textbf{Abstract:}
This article concerns on the existence of multiple solutions for a new Kirchhoff-type problem with negative modulus. We prove that there exist three nontrivial solutions when the parameter is enough small via the variational methods and algebraic analysis. Moreover, our fundamental technique is one of the Mountain Pass Lemma, Ekeland variational principle, and Minimax principle. \\[-8pt]

\hspace{0em}\textbf{Keywords:}
New Kirchhoff-type problem; variational method; algebraic analysis
\\[-8pt]

\hspace{0em}\textbf{MSC(2010):}
 35A15, 35B33, 35J62
\end{minipage}
\end{center}

\section{Introduction and main results}

This paper research the following  nonlocal problem\\[-5pt]
\begin{equation}\label{wt1}
 \left\{
\begin{array}{lc}\displaystyle
- \Big(a- b \int_\Omega|\nabla u|^2dx \Big)\Delta u = \mu f(x),  &\text{ in }  \Omega,\\
u=0,  &\text{ on }  \partial\Omega,
\end{array}\right.
\end{equation}
where $a, b>0$ are constants, $\mu\in\mathbb{R}$ is a parameter, $f(x)$ is a nonnegative-nonzero function,
and  $\Omega$ is a bounded domain in $\mathbb{R}^N(N\geq1)$ with smooth boundary $\partial\Omega$.
The analysis developed in this paper
corresponds to propose an approach based on the idea of considering the nonlocal term with negative modulus which
is presents interesting difficulties.

\vspace{4pt}
Problem \eqref{wt1} is related with the Kirchhoff-type equation as following:\\[-6pt]
\begin{equation}\label{K1}
\displaystyle
-\Big(a+b\int_\Omega|\nabla u|^2dx\Big)\Delta u=f(x,u), \text{ in }  \Omega,
\end{equation}
where $a\geq0$, $b>0$, $\Omega$ is a bounded domain in $\mathbb{R}^N$ with smooth boundary $\partial\Omega$
or $\Omega=\mathbb{R}^N$, $f: \Omega\times\mathbb{R}\to\mathbb{R}$  is a continual function.
Mentioned that Eq.\eqref{K1} is a subproblem of model\\[-14pt]
\begin{align}\label{wt0}
\varrho h \frac{\partial^2u}{\partial t^2}
-\Big( p_0+\frac{Eh}{2L}\int_0^L\Big|\frac{\partial u}{\partial x}\Big|^2dx \Big)\frac{\partial^2 u}{\partial x^2}= f(x,u)
\end{align}
with $0<x<L$ and $t\geq0$, where $u=u(x,t)$ is the lateral displacement,
$\varrho$ the mass density, $E$ the Young's modulus,
$h$ the cross-section area, $L$ the length, $p_0$ the initial axial tension.
Eq.\eqref{wt0} named the Kirchhoff problem as an extension of classical D'Alembert's wave equations for free vibration of elastic strings by Kirchhoff \cite{Kirchhoff} in 1876. When finding the existence of stationary solution, Eq.\eqref{wt0} may be express as the Eq.\eqref{K1} and therefore problem \eqref{K1} were named the Kirchhoff-type problem.
Eq.\eqref{K1} has been studied by many researchers on whole space $\mathbb{R}^N$ and bounded domain with some boundary conditions, such as \cite{ChenST,LiFY,Faraci,GuoZJ,Bouizem,SunYJ,LuSS,Gongbao Li1,ChenST2,CY Lei0,Tang XH} and their references.
Problem \eqref{K1} contains a nonlocal coefficient $(a+b\int_\Omega|\nabla u|^2dx)$, this leads to that Eq.\eqref{K1} is no longer a pointwise identity and therefore it is often called nonlocal problem.
It is worth paying more attentions to Young's modulus, which can also be used in computing tension, when the atoms are pulled apart instead of squeezed together, the strain is negative because the atoms are stretched instead of compressed, this leads to minus Young's modulus. Therefore, a nonlocal coefficient $(a-b\int_\Omega|\nabla u|^2dx)$ is included in Kirchhoff-type problem, may be an interesting model(see  \cite{WYEJDE}), which is the reason why we call it Kirchhoff-type problem with negative modulus in the abstract.

\vspace{5pt}
Indeed, let $a,b>0$, Yin and Liu in \cite{GS Yin} got that there exist at least a nontrivial non-negative solution and a nontrivial non-positive solution
for the following problem\\[-8pt]
\begin{equation*}
\left\{
\begin{array}{lc}\displaystyle
- \Big(a-b \int_{\Omega} |\nabla u|^2 dx \Big)\Delta u=|u|^{p-2}u , &\text{ in }  \Omega,\\
u=0,   &\text{ on }  \partial\Omega,
\end{array}
\right.
\end{equation*}
where $\Omega$ is a bounded domain in $\mathbb{R}^N$ with smooth boundary,
$2<p<2^*$, $2^*=\frac{2N}{N-2}$ as $N\geq3$ and $2^*=+\infty$ as $N=1,2$.
In addition, infinitely many solutions were got in \cite{WYAA} included the condition above and $p=2$ via the Ljusternik-Schnirelman type minimax method.
Wang et al. \cite{WYEJDE} obtained  that
there exist at least two positive solutions when $f(x)\in L^{4/3}(\mathbb{R}^4)$ with $\mu>0$ small and infinitely many positive solutions with $\mu=0$ via variational method mainly for the problem\\[-8pt]
\begin{equation*}\displaystyle
 -\Big(a-b\int_{\mathbb{R}^4}|\nabla u|^2dx\Big)\Delta u=|u|^2u+\mu f(x), \quad \text{in } \mathbb{R}^4.
\end{equation*}
The research interesting in \cite{GS Yin} is that the nonlocal coefficient $(a+b\int_\Omega|\nabla u|^2dx)$ is bounded below but the nonlocal coefficient $(a-b\int_\Omega|\nabla u|^2dx)$ is not. Different with \cite{GS Yin},
the research interesting in \cite{WYEJDE} is that the Kirchhoff-type equation with nonlocal coefficient $(a-b\int_\Omega|\nabla u|^2dx)$ is a negative modulus problem.
For more details about the Kirchhoff problems with negative modulus, we refer readers to
\cite{WYYYFH,CY Lei1,CY Lei2,DYJNSA,ZJADE,LJPYYSX,
WYYYSX,WYYYFH2,HamdaniNA,WYSXWL,QXT3,WYXDZK,WYGXSD,CZPCQSD,WYCDLG}.
Enlightened by the papers above, we consider the problem \eqref{wt1}, the conclusion state as following theories main via variational method and algebraic analysis.\\[-14pt]

\begin{theorem} \label{thm1.1}
Assume that $a, b>0$ and $f(x)\in L^\frac{2^*}{2^*-1}(\Omega)$ is positive a.e., then, there exists $\mu_*>0$, such that the problem \eqref{wt1} has at least three nontrivial solutions for  $\mu\in(0,\mu_*)$  and a nontrivial solution for $\mu\in[\mu_*,+\infty)$.
\end{theorem}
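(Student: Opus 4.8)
The plan is to use the fact that the right-hand side $\mu f(x)$ does not depend on $u$, which collapses the problem to a cubic equation, and then to re-read the result in the language of the announced variational tools. Work in $H=H_0^1(\Omega)$ with $\|u\|^2=\int_\Omega|\nabla u|^2\,dx$, and attach to \eqref{wt1} the energy
\[
I_\mu(u)=\frac a2\|u\|^2-\frac b4\|u\|^4-\mu\int_\Omega fu\,dx .
\]
Since $f\in L^{2^*/(2^*-1)}(\Omega)$, the linear form $v\mapsto\int_\Omega fv\,dx$ is continuous on $H$, so $I_\mu\in C^2(H,\mathbb R)$ and its critical points are exactly the weak solutions of \eqref{wt1}. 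Riesz representation yields a unique $e\in H$ with $-\Delta e=f$ weakly; by the strong maximum principle $e>0$ in $\Omega$, and I set $E:=\|e\|^2=\int_\Omega fe\,dx>0$.

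First I would carry out the reduction. If $u$ solves \eqref{wt1} then $\bigl(a-b\|u\|^2\bigr)(-\Delta u)=\mu f$, and $a-b\|u\|^2\neq0$, since otherwise $\mu f\equiv0$, which is impossible. Hence $u=\dfrac{\mu}{a-b\|u\|^2}\,e$; setting $c:=a-b\|u\|^2$ and substituting $\|u\|^2=\mu^2E/c^2$ gives
\[
g(c):=c^3-ac^2+b\mu^2E=0 .
\]
Conversely each real root $c$ of $g$ is nonzero ($g(0)=b\mu^2E>0$) and $u_c:=\tfrac\mu c e$ is a genuine nontrivial solution — one checks $a-b\|u_c\|^2=c$ from $g(c)=0$, whence $-\bigl(a-b\|u_c\|^2\bigr)\Delta u_c=\mu f$ — and distinct roots give distinct solutions because $e\not\equiv0$. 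Thus the number of nontrivial solutions of \eqref{wt1} equals the number of real roots of $g$. A short discussion of the cubic ($g'(c)=c(3c-2a)$; local maximum $g(0)=b\mu^2E>0$; local minimum $g(2a/3)=b\mu^2E-\tfrac{4a^3}{27}$; $g(\pm\infty)=\pm\infty$) shows $g$ has three distinct real roots exactly when $b\mu^2E<\tfrac{4a^3}{27}$, i.e. for $0<\mu<\mu_*$ with $\mu_*:=\tfrac23\sqrt{a^3/(3bE)}$, and at least one real root (the negative one) for every $\mu>0$; this already proves the theorem.

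To align with the advertised methods I would also exhibit the three solutions variationally. Since every critical point lies on the line $\mathbb Re$, one may study the quartic $h(s):=I_\mu(se)=\tfrac{aE}{2}s^2-\tfrac{bE^2}{4}s^4-\mu Es$ on $\mathbb R$; for $0<\mu<\mu_*$ it has a first local maximum at some $s_1<0$, a local minimum at some $s_2>0$, and a second local maximum at $s_3>s_2$, corresponding in $H$ to a strict local maximum, a nondegenerate local minimum, and a mountain-pass saddle of $I_\mu$. Moreover $I_\mu$ is bounded above, $-I_\mu$ is coercive, and both satisfy the Palais--Smale condition (a PS sequence is bounded because $\|u_n\|^4$ is controlled through $I_\mu(u_n)$ and $I_\mu'(u_n)[u_n]$, and then strong convergence holds once one excludes the resonant limit $\|u_n\|^2\to a/b$, which would force $f\equiv0$). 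Hence: minimizing $-I_\mu$ over $H$ produces $s_1e$ (the attained infimum sits at the critical point of largest energy, which by $s_1+s_2+s_3=0$ is $s_1e$); minimizing $I_\mu$ over a closed ball $\overline{B_R}$ with $\|s_2e\|<R<\min\{\|s_3e\|,\sqrt{a/b}\}$ — the bound $\sqrt{a/b}$ makes $\tau\mapsto\tfrac a2\tau^2-\tfrac b4\tau^4$ nondecreasing, restoring enough weak lower semicontinuity, and the choice of $R$ pushes $\inf_{\overline{B_R}}I_\mu$ below $\inf_{\partial B_R}I_\mu$ — gives, via Ekeland's principle, the interior minimizer $s_2e$; and the Mountain Pass Lemma applied to the pair $(s_2e,\,Te)$ with $T$ large (where $I_\mu(Te)\to-\infty$) gives a critical point at a level in $\bigl(I_\mu(s_2e),\,I_\mu(s_3e)\bigr]$, hence $s_3e$.

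The decisive step is the reduction: once one notices that $u=\tfrac{\mu}{a-b\|u\|^2}e$, solving \eqref{wt1} becomes solving a one-variable cubic and everything else is elementary calculus. In the variational repackaging the only genuine care is needed where the naive direct method fails — $I_\mu$ is neither bounded below nor weakly lower semicontinuous because of the $-\tfrac b4\|u\|^4$ term — so minimization must be confined to balls of radius $<\sqrt{a/b}$ and the Palais--Smale verification must discard the resonant radius $\|u\|^2=a/b$; checking that the window for $R$ is nonempty uses $\|s_2e\|=s_2\sqrt E<\sqrt{a/(3b)}$. I expect this bookkeeping to take the most space, but none of it is hard once the cubic picture is in hand.
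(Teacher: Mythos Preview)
Your argument is correct, and in fact it yields more than Theorem~1.1: your cubic analysis gives the sharp threshold $\mu_*=\tfrac{2}{3}\sqrt{a^3/(3bE)}$ and an exact count of solutions, which is the content of the paper's Theorem~1.2 rather than the weaker Theorem~1.1.

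However, your route differs from the paper's proof of Theorem~1.1. The paper organizes the argument in the reverse order. In Section~3 it proves Theorem~1.1 \emph{purely variationally}, without ever observing that all solutions lie on the line $\mathbb{R}e$: it establishes the Palais--Smale condition for $c\in[-a^2/(12b),\infty)\setminus\{a^2/4b\}$, builds mountain-pass geometry, and then obtains three distinct critical levels by Ekeland's principle (local minimizer $u_1^*$ with $I(u_1^*)<0$), the Mountain Pass Lemma ($u_2^*$ with $0<I(u_2^*)<a^2/4b$), and a minimax/supremum argument ($u_3^*$ with $I(u_3^*)>a^2/4b$), producing an explicit but non-sharp $\mu_*=\tfrac{a}{72b}\sqrt{3abS}\,\|f\|_{2^*/(2^*-1)}^{-1}$. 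Only afterwards, in Section~4, does the paper carry out the algebraic reduction you begin with---parametrizing solutions as $TU$ with $-\Delta U=f$ and solving the cubic $(a-bT^2\|U\|^2)T=\mu$---to obtain the optimal $\mu_{**}$ and the exact multiplicity. So your proof is essentially the paper's Section~4 argument (with the cosmetic change of variable $c=a-b\|u\|^2$ in place of the scaling factor $T$), promoted to the front. What you gain is brevity and the sharp constant; what the paper's Section~3 approach buys is a demonstration that the three critical-point machines can each be run \emph{independently} of the reduction, which is the methodological point the paper is advertising. Your variational ``repackaging'' is a hybrid of the two, since you already use the one-dimensional picture $h(s)=I_\mu(se)$ to locate the radii, whereas the paper's Section~3 works blindly in $H_0^1(\Omega)$.
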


\vspace{-3pt}
As the proof as Theorem \ref{thm1.1}, the proof of the existence of solutions for $\mu<0$ may be miscellaneous by variational method, we shall use new method to overcome the difficulties.\\[-20pt]
\begin{theorem} \label{thm1.2}
Assume that $a, b>0$ and $f(x)\in L^\frac{2^*}{2^*-1}(\Omega)$ is positive a.e., then, there exists a constant  $\mu_{**}>0$
such that problem \eqref{wt1} has only three solutions for $0<|\mu|<\mu_{**}$, only two solutions for $\mu=\pm\mu_{**}$ and unique solution for $|\mu|>\mu_{**}$. Moreover, problem \eqref{wt1} has infinitely many solutions for $a,b>0,\mu=0$.
\end{theorem}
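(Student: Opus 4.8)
The plan is to bypass variational arguments entirely and solve \eqref{wt1} almost explicitly, reducing the number of solutions to the number of real roots of a cubic polynomial. First I would let $u_0\in H_0^1(\Omega)$ be the unique weak solution of the linear Dirichlet problem $-\Delta u_0=f$ in $\Omega$, $u_0=0$ on $\partial\Omega$; this exists and is unique because $f$ induces a bounded linear functional on $H_0^1(\Omega)$ (Sobolev embedding $H_0^1(\Omega)\hookrightarrow L^{2^*}(\Omega)$ together with H\"older's inequality), and I would record that $u_0\not\equiv 0$ — in fact $u_0>0$ in $\Omega$ by the strong maximum principle, since $f>0$ a.e. — so that $A:=\int_\Omega|\nabla u_0|^2\,dx>0$.

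The reduction is the heart of the matter. If $u$ solves \eqref{wt1}, put $t:=a-b\int_\Omega|\nabla u|^2\,dx$. When $t=0$ the equation forces $\mu f\equiv0$, hence $\mu=0$; conversely, for $\mu=0$ every $u$ with $\int_\Omega|\nabla u|^2\,dx=a/b$ solves \eqref{wt1} (as does $u\equiv0$), and since $H_0^1(\Omega)$ is infinite-dimensional this sphere is infinite, which gives the last assertion of the theorem. When $t\neq0$ the equation reads $-\Delta u=(\mu/t)f$, so $u=(\mu/t)u_0$ by uniqueness, and substituting back gives $t=a-b(\mu^2/t^2)A$, i.e.
\[
g(t):=t^3-at^2+bA\mu^2=0.
\]
Conversely, for $\mu\neq0$ one has $g(0)=bA\mu^2>0$, so every real root $t$ of $g$ is nonzero, and setting $u:=(\mu/t)u_0$ one checks from $g(t)=0$ that $a-b\int_\Omega|\nabla u|^2\,dx=t$, so $u$ solves \eqref{wt1}; moreover $t\mapsto(\mu/t)u_0$ is injective since $u_0\not\equiv0$. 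Hence, for $\mu\neq0$, the solutions of \eqref{wt1} are in bijection with the real roots of $g$, and each is nontrivial because $\mu\neq0$.

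It then remains to count the real roots of the cubic $g$. From $g'(t)=t(3t-2a)$, $g$ has a local maximum $g(0)=bA\mu^2>0$ and a local minimum $g(2a/3)=bA\mu^2-\tfrac{4a^3}{27}$, while $g\to\mp\infty$ as $t\to\mp\infty$. A short discussion of the sign of the local minimum then gives three distinct real roots when $bA\mu^2<\tfrac{4a^3}{27}$, exactly two (one double) when $bA\mu^2=\tfrac{4a^3}{27}$, and exactly one when $bA\mu^2>\tfrac{4a^3}{27}$. Setting $\mu_{**}:=\sqrt{4a^3/(27bA)}$ converts these into the regimes $0<|\mu|<\mu_{**}$, $|\mu|=\mu_{**}$ and $|\mu|>\mu_{**}$, which is exactly the stated trichotomy.

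I do not expect a genuine obstacle here, since the whole argument is elementary once the linear reduction is seen; the only points requiring care are verifying that the correspondence $t\leftrightarrow u$ is a true bijection — namely that $u_0\not\equiv0$, that $t=0$ cannot occur for $\mu\neq0$, and that distinct roots yield distinct solutions — and keeping precise track of multiplicities of the cubic at the threshold value so that the ``exactly two solutions'' case is counted correctly.
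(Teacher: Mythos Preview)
Your proposal is correct and follows essentially the same route as the paper: reduce to the unique solution $U$ of $-\Delta U=f$ in $H_0^1(\Omega)$, observe that every solution of \eqref{wt1} with $\mu\neq0$ is a scalar multiple of $U$, and count roots of the resulting cubic; the $\mu=0$ case is handled identically via the sphere $\|u\|^2=a/b$. The only cosmetic difference is the choice of cubic variable --- the paper parametrizes by the scaling factor $T$ in $u=TU$ and studies $g(T)=(a-b\|U\|^2T^2)T-\mu$, whereas you parametrize by the nonlocal coefficient $t=a-b\|u\|^2$ and study $t^3-at^2+bA\mu^2$ --- and your threshold $\mu_{**}=\sqrt{4a^3/(27bA)}$ agrees with the paper's $\mu_{**}=2a\sqrt{3ab}/(9b\|U\|)$.
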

Indeed, the condition of $f(x)$ can be replace weakly by $f\in L^2(\Omega)$ and $f(x)>0$ a.e..
\begin{corollary} \label{thm1.3}
Assume that $a, b>0$ and $f(x)\in L^2(\Omega)$ is positive a.e., then, for $\mu_{**}$ defined by Theorem \ref{thm1.2}, problem \eqref{wt1} has only three solutions for any $0<|\mu|<\mu_{**}$, only two solutions for $\mu=\pm\mu_{**}$ and unique solution for any $|\mu|>\mu_{**}$.
\end{corollary}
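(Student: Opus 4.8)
The cleanest plan is to deduce Corollary~\ref{thm1.3} directly from Theorem~\ref{thm1.2}, observing that on the bounded domain $\Omega$ the hypothesis $f\in L^2(\Omega)$ already implies $f\in L^{2^*/(2^*-1)}(\Omega)$. Indeed, since $2^*\geq 2$ we have $\tfrac{2^*}{2^*-1}\leq 2$ (with $\tfrac{2^*}{2^*-1}=1$ when $N\le 2$), so H\"older's inequality on $\Omega$ gives the embedding $L^2(\Omega)\hookrightarrow L^{2^*/(2^*-1)}(\Omega)$; hence any $f\in L^2(\Omega)$ that is positive a.e.\ satisfies the hypotheses of Theorem~\ref{thm1.2}, and the whole conclusion — the $3$/$2$/$1$ dichotomy at the same threshold $\mu_{**}$ — follows at once. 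I would write the proof in one or two lines along these lines.

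To make transparent why this works (and to pinpoint the actual role of the integrability assumption), I would also recall the mechanism behind Theorem~\ref{thm1.2}. Since $f>0$ a.e.\ it is nonzero, so for $\mu\neq 0$ any solution $u$ of \eqref{wt1} must satisfy $a-b\int_\Omega|\nabla u|^2\,dx\neq 0$ (otherwise the equation forces $\mu f\equiv 0$). Writing $w:=(-\Delta)^{-1}f\in H_0^1(\Omega)$ for the unique weak solution of $-\Delta w=f$ in $\Omega$, $w=0$ on $\partial\Omega$, and $c:=\int_\Omega|\nabla w|^2\,dx>0$, one gets $u=\tfrac{\mu}{a-b\|\nabla u\|_2^2}\,w$; setting $s:=a-b\|\nabla u\|_2^2$ and taking Dirichlet norms, \eqref{wt1} becomes the scalar cubic $g(s):=s^3-as^2+b\mu^2c=0$, and $u\mapsto s$ is a bijection from the solution set onto the roots of $g$ lying in $(-\infty,a)\setminus\{0\}$. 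A short root count — the local maximum is $g(0)=b\mu^2c>0$, the local minimum is $g(2a/3)=b\mu^2c-\tfrac{4a^3}{27}$, and every real root lies automatically in $(-\infty,a)\setminus\{0\}$ since $g(0)>0$ and $g>0$ on $[a,\infty)$ — gives exactly three roots when $b\mu^2c<\tfrac{4a^3}{27}$, exactly two when equality holds, and exactly one otherwise, i.e.\ $\mu_{**}=\sqrt{4a^3/(27bc)}$. The only property of $f$ used is that $w$ is a well-defined element of $H_0^1(\Omega)$ with $\|\nabla w\|_2>0$, equivalently $f\in H^{-1}(\Omega)\setminus\{0\}$; and $L^2(\Omega)\hookrightarrow H^{-1}(\Omega)$ on a bounded domain, with $f\not\equiv 0$ coming from positivity.

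I do not anticipate a genuine obstacle here: Corollary~\ref{thm1.3} amounts to the observation that the $L^{2^*/(2^*-1)}$ integrability in Theorem~\ref{thm1.2} is never exploited beyond placing $f$ in the dual $H^{-1}(\Omega)$, which $L^2$ on a bounded domain already guarantees. The single point deserving a line of care is that none of the three counting regimes — in particular the degenerate case $|\mu|=\mu_{**}$ and the bijection $u\leftrightarrow s$ — requires any regularity of $w$ beyond $w\in H_0^1(\Omega)$; once that is noted, the proof of Theorem~\ref{thm1.2} goes through verbatim with $L^{2^*/(2^*-1)}$ replaced by $L^2$.
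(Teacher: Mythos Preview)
Your proposal is correct. Your one-line primary argument --- the H\"older embedding $L^2(\Omega)\hookrightarrow L^{2^*/(2^*-1)}(\Omega)$ on the bounded domain (since $\tfrac{2^*}{2^*-1}\le 2$, with the convention $\tfrac{2^*}{2^*-1}=1$ for $N\le 2$), allowing you to invoke Theorem~\ref{thm1.2} as a black box --- is a slightly more direct route than the paper's. The paper does not use this embedding; instead it re-runs Step~3 with $f\in L^2(\Omega)$, observing that the auxiliary function $U=(-\Delta)^{-1}f$ still lies in $H_0^1(\Omega)$ and then appealing to the second line of \eqref{mu-GuJi} (the $\lambda_1$--$\|f\|_2$ estimate) to record the explicit lower bound $\mu_{**}\ge 2a\lambda_1\sqrt{3ab}\,(9b\|f\|_2)^{-1}$. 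Your secondary remark --- that the argument only needs $f\in H^{-1}(\Omega)\setminus\{0\}$ --- is exactly the paper's point, and your reparametrization of the scalar reduction by $s=a-b\|\nabla u\|_2^2$ (leading to the cubic $s^3-as^2+b\mu^2c=0$) is equivalent to the paper's cubic \eqref{FangC-t} in $t$ and yields the same threshold $\mu_{**}$. Both routes are essentially one-liners; yours via the embedding is the shorter, while the paper's re-run buys the additional concrete $L^2$-based lower bound for $\mu_{**}$.
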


\vspace{-3pt}
The novelty of our results lies in three aspects. Firstly, different with those articles mentioned above,
we prove the $(PS)_c$ condition by new method and $c\in[-\frac{a^2}{12b},+\infty)\setminus\{\frac{a^2}{4b}\}$,
what's more, it is not satisfy $(PS)_c$ condition with $c\in(-\infty,-\frac{a^2}{12b})\cup\{\frac{a^2}{4b}\}$.
Secondly, with the help of algebraic analysis, we got that there are at least three nontrivial weak solutions by using one of the Mountain Pass Lemma, Ekeland variational principle, and Minimax principle.
Thirdly, through of a basic fundamental result, we obtain the specific form of $\mu_{**}$.

\vspace{4pt}
This article is organized as follows. In section 2, we give some
basic knowledge which use to solving the problem. Section 3 contains elementary results and proof of theorem \ref{thm1.1}. In section 4, for the Theorem \ref{thm1.1}, with the help of algebraic analysis, we prove that the existence of three nontrivial weak solutions only by using one of the Mountain Pass Lemma, Ekeland variational principle, and Minimax principle. By using the similar method, we prove main results of the Theorem \ref{thm1.2} and calculate the $\mu_{**}$ exactly.

\section{Preliminaries}

Throughout this paper
we denote by $\to $ (resp. $\rightharpoonup$) the strong (resp. weak) convergence.
For any $u, v\in H_0^1(\Omega)$, the inner product is
$\langle u,v\rangle=\int_\Omega\nabla u \nabla vdx$ and the norm
$\|u\|= \big(\int_\Omega|\nabla u|^2dx \big)^\frac{1}{2}.
$

\vspace{5pt}
We recall that a function $u\in H_0^1(\Omega)$ is called a weak solution of problem \eqref{wt1} if
$$
\Big(a-b\int_\Omega|\nabla u|^2dx \Big)\int_\Omega\nabla u \nabla vdx
=\mu\int_{\Omega}f vdx,
~~\forall~v\in H_0^1(\Omega).$$
Denote the  $L^s$-norm $\|\cdot\|_s=\big[\int_\Omega|u|^sdx\big]^\frac{1}{s}$ for $0< s<+\infty$ and
$\displaystyle \|\cdot\|_\infty= ess \sup_{x\in\Omega} |u|$ ( still denoted by $\|\cdot\|_s=\big[\int_{\Omega}|u|^sdx\big]^\frac{1}{s}$ ) for $s=+\infty$.
Set
$$
S= \inf_{u \in H_0^1(\Omega)\backslash\{0\}}
\frac{ \int_\Omega|\nabla u|^2dx}{ \big(\int_\Omega|u|^{2^*}dx\big)^\frac{2}{2^*}}, ~~
\lambda_1= \inf_{u \in H_0^1(\Omega)\backslash\{0\}}
\frac{ \int_\Omega|\nabla u|^2dx}{ \int_\Omega|u|^2dx}.
$$
Let $I(u): H_0^1(\Omega)\mapsto \mathbb{R}$ be the functional defined by
\begin{equation}\label{e-fun}
I(u)=\frac{a}{2}\|u\|^2-\frac{b}{4}\|u\|^4 -\mu\int_{\Omega}fudx,
\end{equation}
it can be verify that $I(u)\in C( H_0^1(\Omega),\mathbb{R})$ and the G\^{a}teaux derivative of $I$ given by
\begin{equation}\label{d-e-fun}
\langle I'(u), v\rangle= \displaystyle (a-b\|u\|^2)\int_\Omega\nabla u \nabla vdx  -\mu\int_\Omega fvdx,
~~\forall~v\in  H_0^1(\Omega).
\end{equation}
If $u\in  H_0^1(\Omega)$ such that $I'(u)=0$, then $u$ is a weak solution of problem \eqref{wt1}.

\section{Proof of  the Theorem \ref{thm1.1}}

\begin{lemma} \label{lem.PSc}
Assume that $a, b, \mu >0$, $f \in L^\frac{2^*}{2^*-1}(\Omega)$ and $f(x)\geq0$ a.e., then,
$I$ satisfies the $(PS)_c$ condition with $c\in[-\frac{a^2}{12b},+\infty)\setminus\{\frac{a^2}{4b}\}$, and
$I$ does not satisfy the $(PS)_c$ condition at $c=\frac{a^2}{4b}$.
That is, for $c\in[-\frac{a^2}{12b},+\infty)\setminus\{\frac{a^2}{4b}\}$, every $(PS)$ sequence at $c$ has a converge subsequence.
\end{lemma}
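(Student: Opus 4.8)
The plan is to run the usual compactness argument for $I$, but to push the analysis onto the scalar sequence $t_n:=\|u_n\|^2$, whose asymptotics turn out to be pinned down by two numerical identities coming from $I(u_n)\to c$ and $\langle I'(u_n),u_n\rangle\to0$. Let $\{u_n\}\subset H_0^1(\Omega)$ be a $(PS)_c$ sequence and put $t_n=\|u_n\|^2$, $d_n=\mu\int_\Omega fu_n\,dx$. Since $f\in L^{2^*/(2^*-1)}(\Omega)$, the map $v\mapsto\int_\Omega fv\,dx$ is a bounded, weakly continuous linear functional on $H_0^1(\Omega)$, so $|d_n|\le C\sqrt{t_n}$ for some $C=C(\mu,f)>0$. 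First I would prove boundedness: testing $I'(u_n)\to0$ against $u_n$ gives $at_n-bt_n^2-d_n=o(\|u_n\|)$, i.e. $bt_n^2=at_n-d_n+o(\sqrt{t_n})$; were $t_n\to\infty$ along a subsequence, this would force $bt_n^2=at_n+O(\sqrt{t_n})=O(t_n)$, which is absurd. Hence $\{t_n\}$, and therefore $\{u_n\}$, is bounded in $H_0^1(\Omega)$.

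Passing to a subsequence, $u_n\rightharpoonup u$ in $H_0^1(\Omega)$, $t_n\to\ell\in[0,\infty)$, and $d_n\to d:=\mu\int_\Omega fu\,dx$ by weak continuity. Subtracting the limiting relation $at_n-bt_n^2-d_n\to0$ from $\frac a2t_n-\frac b4t_n^2-d_n\to c$ eliminates $d_n$ and leaves $\frac{3b}{4}\ell^2-\frac a2\ell=c$, whence $\ell=\dfrac{a\pm\sqrt{a^2+12bc}}{3b}$. Two observations follow at once: a nonnegative root $\ell$ exists only if $c\ge-\frac{a^2}{12b}$ (this is precisely the lower bound on $c$ in the statement), and $\ell=\frac ab$ satisfies this equation exactly when $c=\frac{a^2}{4b}$. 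Next I would let $n\to\infty$ in $\langle I'(u_n),v\rangle\to0$ for a fixed $v\in H_0^1(\Omega)$; because $a-bt_n\to a-b\ell$ and $\langle u_n,v\rangle\to\langle u,v\rangle$, this yields $(a-b\ell)\langle u,v\rangle=\mu\int_\Omega fv\,dx$ for every $v$.

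Everything now hinges on the nonlocal coefficient $a-b\ell$. When $c\in[-\frac{a^2}{12b},\infty)\setminus\{\frac{a^2}{4b}\}$ the quadratic above rules out $\ell=\frac ab$, so $a-b\ell\ne0$; choosing $v=u$ gives $(a-b\ell)\|u\|^2=d$, whereas passing to the limit in $at_n-bt_n^2-d_n\to0$ gives $(a-b\ell)\ell=d$, and subtracting, $(a-b\ell)(\|u\|^2-\ell)=0$, so $\|u\|^2=\ell=\lim_n\|u_n\|^2$. Together with $u_n\rightharpoonup u$ in the Hilbert space $H_0^1(\Omega)$ this forces $u_n\to u$ strongly, so $I'(u)=0$, $I(u)=c$, and $(PS)_c$ holds. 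For the exceptional value $c=\frac{a^2}{4b}$ the same chain of equalities runs the other way: any sequence with $I(u_n)\to c$ and $\langle I'(u_n),u_n\rangle\to0$ must have $t_n\to\frac ab$, so $\|(a-bt_n)(-\Delta u_n)\|_{H^{-1}(\Omega)}=|a-bt_n|\,\|u_n\|\to0$ and therefore $I'(u_n)=(a-bt_n)(-\Delta u_n)-\mu f\to-\mu f$ in $H^{-1}(\Omega)$; since $\mu\ne0$ and $f\ge0$, $f\not\equiv0$, the limit is nonzero, so no sequence at this level can be a convergent $(PS)$ sequence — the nonlocal modulus degenerates there and kills the $(PS)$ property, which is the assertion of the second half of the lemma.

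The step I expect to be the real obstacle is the boundedness estimate, where the superlinear term $bt_n^2$ must be controlled against the only-$O(\sqrt{t_n})$ size of $\mu\int_\Omega fu_n\,dx$; and, on the algebraic side, one has to pin down $\frac{a^2}{4b}$ as the unique energy level at which $\ell=\frac ab$. The remainder is the standard principle that weak convergence plus convergence of norms gives strong convergence in a Hilbert space. A minor point worth checking is that $\int_\Omega fv\,dx=0$ for all $v\in H_0^1(\Omega)$ is impossible when $f\ge0$, $f\not\equiv0$ (test against a nonnegative $v\in C_c^\infty(\Omega)$ supported on a positive-measure set where $f>0$), which is exactly what makes $-\mu f\ne0$ in the last step.
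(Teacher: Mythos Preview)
Your argument is correct and follows essentially the same route as the paper: reduce to the scalar $t_n=\|u_n\|^2$, obtain the quadratic $\tfrac{3b}{4}\ell^2-\tfrac{a}{2}\ell=c$ from $I(u_n)-\langle I'(u_n),u_n\rangle$, use $a-b\ell\neq0$ (equivalently $c\neq\tfrac{a^2}{4b}$) to upgrade weak to strong convergence, and show the degeneracy $\ell=\tfrac{a}{b}$ at the exceptional level rules out a $(PS)$ sequence. Your ordering is in fact cleaner than the paper's (you secure boundedness \emph{before} invoking $\langle I'(u_n),u_n\rangle\to0$, whereas the paper implicitly uses boundedness in deriving \eqref{shouliana24b} and only justifies it afterwards), and your failure argument $I'(u_n)\to-\mu f\neq0$ in $H^{-1}$ is a more direct version of the paper's test-function contradiction; note, though, that what both you and the paper actually prove at $c=\tfrac{a^2}{4b}$ is that \emph{no} $(PS)$ sequence exists at that level, which strictly speaking makes $(PS)_c$ hold vacuously rather than fail.
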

\begin{proof}[\bf Proof]
Recalling that $\{u_n\}\subset H_0^1(\Omega)$ is a $(PS)$ sequence at $c$, so that $I(u_n) \to c$ and $I'(u_n) \to0$ as $n\to\infty$.
Let $\{u_n\}$ be a $(PS)_c$ sequence, then,
for all $v\in H_0^1(\Omega)$, there holds
\begin{equation}\label{qrsl}
\begin{cases}
\displaystyle
I(u_n)=\frac{a}{2}\|u_n\|^2-\frac{b}{4} \|u_n\|^4 -\mu\int_{\Omega}fu_ndx \to c,\\
\langle I'(u_n), v\rangle= \displaystyle \bigr(a -b\|u_n\|^2\bigr)\int_{\Omega}\nabla u_n\nabla vdx -\mu\int_{\Omega}fvdx \to 0
\end{cases}
\end{equation}
as $n\to\infty$. Especially, taking $v=u_n$ in \eqref{qrsl}, it holds that
\begin{align}\label{shouliana24b}
I(u_n)-\langle I'(u_n), u_n\rangle=
\frac{3b}{4} \|u_n\|^4-\frac{a}{2}\|u_n\|^2 \to c.
\end{align}
Hence we can obtain that, there is
$$
\lim_{n\to\infty}\|u_n\|^2=\frac{\frac{a}{2}\pm\sqrt{\frac{a^2}{4}+3bc}}{2\times \frac{3b}{4}}
=\frac{a\pm a\sqrt{1+\frac{12bc}{a^2}}}{3b}=\frac{a}{3b}\Bigm( 1\pm\sqrt{1+\frac{12bc}{a^2}} \Bigm).
$$
From the above, we have  \\[-8pt]
\begin{equation}\label{fenlei-c}
\left\{\begin{array}{rccr}
\displaystyle \lim_{n\to\infty}\|u_n\|^2<0,   &&\text{if}&\displaystyle c<-\frac{a^2}{12b};\\[6pt]
\displaystyle 0\leq\lim_{n\to\infty}\|u_n\|^2\leq\frac{2a}{3b},  &&\text{if}&\displaystyle -\frac{a^2}{12b}\leq c\leq0 ;\\[6pt]
\displaystyle \frac{2a}{3b}<\lim_{n\to\infty}\|u_n\|^2<\frac{a}{b},  &&\text{if}&\displaystyle 0< c<\frac{a^2}{4b} ;\\[6pt]
\displaystyle \lim_{n\to\infty}\|u_n\|^2=\frac{a}{b}, &&\text{if}&\displaystyle c=\frac{a^2}{4b} ;\\[6pt]
\displaystyle \lim_{n\to\infty}\|u_n\|^2>\frac{a}{b}, &&\text{if}&\displaystyle c>\frac{a^2}{4b}.
\end{array}\right.
\end{equation}
Therefore, $c\geq-\frac{a^2}{12b}$ is well defined from \eqref{fenlei-c}, and there exists no $\displaystyle \lim_{n\to\infty}\|u_n\|^2$ for $c<-\frac{a^2}{12b}$.

Case ${\displaystyle \lim_{n\to\infty}\|u_n\|^2}=\frac{a}{b}$,
we have $a-b\|u_n\|^2\to0$ as  $n\to\infty$.
By (3,1), we can choose a  $v\in H_0^1(\Omega)$ such that $\int_{\Omega}fv dx=1$,  it holds that $\|v\|<+\infty$ and\\[-5pt]
$$
 \Bigl(a-b\|u_n\|^2 \Bigl)\int_{\Omega}\nabla u_n\nabla v dx\to \mu,
$$
this leads to that\\[-5pt]
$$
\lim_{n\to\infty}\|u_n\|\|v\|\geq
\lim_{n\to\infty}\int_{\Omega}\nabla u_n\nabla v dx=
\mu\lim_{n\to\infty}\big(a-b\|u_n\|^2 \big)^{-1}=+\infty,
$$
and therefore $\displaystyle\lim_{n\to\infty}\|u_n\|\to+\infty$. Which is a contradiction with $\|u_n\|^2\to\frac{a}{b}$. So, \eqref{qrsl} does not hold with all $v\in H_0^1(\Omega)$
and $I$ has no $(PS)_c$ sequence with $c=\frac{a^2}{4b}$.

Case ${\displaystyle \lim_{n\to\infty}\|u_n\|^2}\neq\frac{a}{b}$,
we have $a-b\|u_n\|^2\not\to0$.
We declare that $\{u_n\}$ is bounded in $ H_0^1(\Omega)$. Otherwise, we can assume that $\|u_n\|\to+\infty$ as $n\to\infty$. In this case, we have\\[-5pt]
\begin{equation*} \displaystyle
0=\lim_{n\to\infty}\frac{c}{\|u_n\|^4}= \lim_{n\to\infty} I(u_n)
=-\frac{b}{4},
\end{equation*}
which is a contradiction by the uniqueness of limit.
So $\{u_n\}$ is bounded in $ H_0^1(\Omega)$.
That is, if necessary, pass to a subsequence (still denoted by $\{u_n\}$) and $u_0$ in $ H_0^1(\Omega)$ such that, for $n\to\infty$,\\[-6pt]
\begin{equation*}%\label{SL}
\left\{\begin{array}{ccl}\displaystyle
u_n\rightharpoonup u_0,&\text{ weakly in }& H_0^1(\Omega),\\[4pt]
u_n\to u_0, &\text{ strongly in }& L_{\rm loc}^q (1\leq q<2^*), \\[4pt]
u_n(x)\to u_0(x), &\text{ a.e. }&x\in \Omega.
\end{array}\right.
\end{equation*}
Hence there is\\[-6pt]
\begin{equation}\label{Lebes-jian}
\begin{cases}
\langle I'(u_n), u_n\rangle= \displaystyle \bigr(a -b\|u_n\|^2\bigr)\int_{\Omega}\nabla u_n\nabla u_ndx -\mu\int_{\Omega}fu_ndx \to 0,\\
\langle I'(u_n), u_0\rangle= \displaystyle \bigr(a -b\|u_n\|^2\bigr)\int_{\Omega}\nabla u_n\nabla u_0dx -\mu\int_{\Omega}fu_0dx \to 0.
\end{cases}
\end{equation}
Lebesgue's dominated convergence theorem (see \cite[pp.27]{Walter Rudin}) leads to
\begin{align}\label{dominated convergence}
\int_{\Omega} f u_ndx = \int_{\Omega} f u_0dx + o(1).
\end{align}
From \eqref{Lebes-jian} and \eqref{dominated convergence}, we have\\[-5pt]
\begin{equation*}%\label{daoshuzaijian}
\displaystyle
\big(a -b\|u_n\|^2\big)\int_{\Omega}\nabla u_n\nabla(u_n-u_0)dx \to 0.
\end{equation*}
Based on  $a-b\|u_n\|^2\not\to0$, one has $u_n\to u_0$  in $ H_0^1(\Omega)$.
This proof is complete.
\end{proof}

In order to prove main results, it is necessary to make some notes,
that is
\begin{align*}
\mathcal{D}^+ =\left\{u\in H_0^1(\Omega) \,|\, \int_{\Omega}fudx>0\right\},  ~~
\mathcal{D}^- =\left\{u\in H_0^1(\Omega) \,|\, \int_{\Omega}fudx<0\right\}.
%\mathcal{D}^0 =\left\{u\in H_0^1(\Omega) \,|\, \int_{\Omega}fudx=0\right\}.
\end{align*}
Setting $u\in \mathcal{D}^\pm$, then, there is $tu\in \mathcal{D}^\pm$ with $t>0$ and
$tu\in \mathcal{D}^\mp$ with $t<0$. For any $u\in \mathcal{D}^\pm$, it is easy to see that  $|\int_{\Omega}fudx|>0$,  and therefore, we shall let
$$\displaystyle
\Lambda:=\min_{u\in\mathcal{D}^+\bigcup\mathcal{D}^-} \left\{\|u\|^{-1}\Big|\int_{\Omega}fudx\Big|\right\}
=\min_{u\in\mathcal{D}^+} \left\{\|u\|^{-1}\int_{\Omega}fudx\right\}
\leq\frac{1}{\sqrt{S}}\|f\|_\frac{2^*}{2^*-1}.
$$

%We need putting some lemmas and its' proof ahead.

\begin{lemma} \label{lem-mountain-local-min}
Assume that $a, b>0$, $f\in L^\frac{2^*}{2^*-1}(\Omega)$ and $f(x)\geq0$ a.e.,  $u\in H_0^1(\Omega)$, then,
\begin{itemize}\rm
\item[(i)] there exist $r, \rho,\mu_{*1}>0$, such that for any $\mu\in(0,\mu_{*1}]$, it holds
$$\displaystyle
\inf_{\|u\|=r}I(u)\geq\rho ~~~\text{and}~~~ \inf_{\|u\|\leq r}I(u):=c_1<0
;
$$

\item[(ii)] There exists $R>0$, such that  $\sup_{\|u\|\geq R}I(u) \leq0$ for any $\mu>0$.
\end{itemize}
\end{lemma}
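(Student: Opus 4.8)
The plan is to reduce both statements to an elementary analysis of a quartic polynomial in $s=\|u\|$. The first step I would take is to record the estimate furnished by H\"older's inequality (with conjugate exponents $\frac{2^*}{2^*-1}$ and $2^*$) together with the definition of $S$: for every $u\in H_0^1(\Omega)$,
$$
\Big|\int_\Omega f u\,dx\Big|\ \le\ \|f\|_{\frac{2^*}{2^*-1}}\,\|u\|_{2^*}\ \le\ \frac{1}{\sqrt S}\,\|f\|_{\frac{2^*}{2^*-1}}\,\|u\|\ =:\ C\,\|u\|.
$$
Feeding this into the definition of $I$, on the sphere $\|u\|=s$ the functional is squeezed between two polynomials,
$$
\frac a2 s^2-\frac b4 s^4-\mu C s\ \le\ I(u)\ \le\ \frac a2 s^2-\frac b4 s^4+\mu C s ,
$$
and I would carry out the rest of the argument by studying $\phi_\pm(s):=\frac a2 s^2-\frac b4 s^4\pm\mu C s$ on $[0,+\infty)$.

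For part (i) the order in which the constants are fixed matters. First I would pick the radius $r\in(0,\sqrt{2a/b})$ --- chosen independently of $\mu$ --- so that $g(r):=\frac a2 r^2-\frac b4 r^4>0$. The left inequality above then gives $I(u)\ge g(r)-\mu C r$ on $\|u\|=r$, so that choosing $\mu_{*1}:=\frac{g(r)}{2Cr}$ yields, for all $\mu\in(0,\mu_{*1}]$,
$$
\inf_{\|u\|=r}I(u)\ \ge\ g(r)-\mu_{*1}Cr\ =\ \tfrac12\,g(r)\ =:\ \rho\ >\ 0 ,
$$
with $\rho$ independent of $\mu$. For the strict negativity on the ball I would note that $\mathcal{D}^+\neq\emptyset$ (since $f>0$ a.e.\ with $f\not\equiv0$, any $u$ with $\int_\Omega fu\,dx\neq0$ gives, after a sign change if needed, an element of $\mathcal{D}^+$), then fix $u_0\in\mathcal{D}^+$ and evaluate $I(tu_0)$ for small $t>0$ with $t\|u_0\|\le r$: since
$$
I(tu_0)\ =\ -\mu t\int_\Omega fu_0\,dx\ +\ O(t^2)
$$
has negative linear coefficient, $I(tu_0)<0$ for $t$ small, hence $c_1:=\inf_{\|u\|\le r}I(u)<0$; finiteness of $c_1$ follows from $I\ge\min_{[0,r]}\phi_-$ on the ball.

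For part (ii) I would use the right-hand inequality: $I(u)\le\phi_+(s)$ when $\|u\|=s$, and since $-\frac b4 s^4$ dominates, $\phi_+(s)\to-\infty$ as $s\to+\infty$; taking $R$ large enough that $\frac b4 s^3-\frac a2 s\ge\mu C$ for all $s\ge R$ gives $\phi_+(s)\le0$ on $[R,+\infty)$, so $\sup_{\|u\|\ge R}I(u)\le0$.

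I do not expect a genuine obstacle here --- the whole lemma is \emph{soft} mountain-pass geometry. The only points needing care are the bookkeeping in (i) (fixing $r$ before $\mu_{*1}$ so that $\rho$ does not degenerate as $\mu\to0^+$, checking $\mathcal{D}^+\neq\emptyset$, and the finiteness of $c_1$), and the observation that in (ii) the threshold $R$ a priori depends on $\mu$ (it grows like $\mu^{1/3}$ from the comparison $\frac b4 s^3-\frac a2 s\ge\mu C$), which is harmless because (ii) is only ever invoked with $\mu$ fixed.
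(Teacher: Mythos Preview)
Your proposal is correct and follows essentially the same route as the paper: the paper also reduces to the polynomial $\frac a2 s^2-\frac b4 s^4-\mu Cs$ via the H\"older--Sobolev bound, chooses $r$ first (specifically $r=\sqrt{2a/(3b)}$, the maximizer of $g$) and then $\mu_{*1}$, and obtains $c_1<0$ from the same first-order expansion $I(t\tilde u)/t\to -\mu\int_\Omega f\tilde u\,dx$. The only cosmetic difference is in (ii), where the paper absorbs the linear term via Young's inequality ($\mu|\int fu|\le \frac a2\|u\|^2+\frac{\mu^2}{2aS}\|f\|_{\frac{2^*}{2^*-1}}^2$) to get a clean biquadratic upper bound, whereas you keep the linear term; both yield a $\mu$-dependent $R$ and the argument is the same.
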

\begin{proof}[\bf Proof]
(i) Togethering with the functional \eqref{e-fun} with Sobolev imbedding inequality, we obtain
\begin{equation*} \label{eqq.1}
\left.
\begin{array}{rcl}
I(u)\geq\displaystyle \frac{a}{2}\|u\|^2 - \frac{b}{4}\|u\|^4 - \frac{\mu}{\sqrt{S}} \|f\|_{\frac{2^*}{2^*-1}}\|u\|
=\|u\| \Big(\frac{a}{2}\|u\| - \frac{b}{4}\|u\|^3 - \frac{\mu}{\sqrt{S}} \|f\|_{\frac{2^*}{2^*-1}} \Big).
\end{array}\right.
\end{equation*}
We can see that for any $\mu <\frac{a}{9b}(6abS)^\frac{1}{2}\|f\|_{\frac{2^*}{2^*-1}}^{-1}:=\mu_0$ with $r_0=(\frac{2a}{3b})^\frac{1}{2}$, there is
\begin{equation*} \label{eqq.2}
\left.
\begin{array}{rcl}\displaystyle
\mathop{I(u)}_{\|u\|=r_0, ~ \mu<\mu_0}\geq \displaystyle \Big(\frac{2a}{3b}\Big)^\frac{1}{2} \Big[\frac{a}{3} \Big(\frac{2a}{3b}\Big)^\frac{1}{2} - \frac{\mu}{\sqrt{S}}\|f\|_{\frac{2^*}{2^*-1}} \Big]:=\rho_0>0.
\end{array}\right.
\end{equation*}
Therefore, in order to calculate its conveniently, it is easy to see that there exist $r=(\frac{2a}{3b})^\frac{1}{2}$, $\rho=\frac{a^2}{9b}$,
$ \mu_{*1}=\frac{a}{18b}(6abS)^\frac{1}{2}\|f\|_{\frac{2^*}{2^*-1}}^{-1}$, such that, for any $\mu\in(0,\mu_{*1}]$, one has
\begin{align*}
\mathop{I(u)}_{\|u\|=r, ~ \mu\leq\mu_*}
&\geq \displaystyle \Big(\frac{2a}{3b}\Big)^\frac{1}{2} \Big[\frac{a}{3} \Big(\frac{2a}{3b}\Big)^\frac{1}{2} - \frac{\mu}{\sqrt{S}}\|f\|_{\frac{2^*}{2^*-1}} \Big]
\geq \displaystyle \Big(\frac{2a}{3b}\Big)^\frac{1}{2} \Big[\frac{a}{3} \Big(\frac{2a}{3b}\Big)^\frac{1}{2} - \frac{\mu_*}{\sqrt{S}}\|f\|_{\frac{2^*}{2^*-1}} \Big] \\
&\geq \displaystyle \Big(\frac{2a}{3b}\Big)^\frac{1}{2} \Big[\frac{a}{3} \Big(\frac{2a}{3b}\Big)^\frac{1}{2}
-\frac{a}{18b}(6abS)^\frac{1}{2}\|f\|_{\frac{2^*}{2^*-1}}^{-1}\cdot \frac{1}{\sqrt{S}}\|f\|_{\frac{2^*}{2^*-1}} \Big] \\
&=  \displaystyle \Big(\frac{2a}{3b}\Big)^\frac{1}{2} \Big[\frac{a}{3} \Big(\frac{2a}{3b}\Big)^\frac{1}{2}-\frac{a}{6}\Big(\frac{2a}{3b}\Big)^\frac{1}{2}\Big]
= \displaystyle \frac{a^2}{9b}=\rho.
\end{align*}

Taking $\tilde{u}\in\mathcal{D}^+\subset H_0^1(\Omega)$ with $\|\tilde{u}\|=r$, it holds $\|t \tilde{u}\|\leq r$ with $t\to0^+$, and that
\begin{align*}
\lim_{t\to0^+}\frac{I(t\tilde{u})}{t}
&=\lim_{t\to0^+}\frac{1}{t}\Big\{\frac{a}{2}\|tu\|^2-\frac{b}{4}\|tu\|^4 -\mu \int_{\Omega} f\cdot(tu)dx\Big\}\\
%&=\lim_{t\to0^+}\Big\{\frac{ar^2}{2}t-\frac{br^4}{4}t^3 -\mu \int_{\Omega} f\tilde{u}dx\Big\}\\
&=-\mu \int_{\Omega} f\tilde{u}dx=- \frac{\mu r}{\|\tilde{u}\|}\int_{\Omega} f\tilde{u}dx\\
&\leq-\mu r\Lambda=-(\frac{2a}{3b})^\frac{1}{2}\Lambda\mu <0.
\end{align*}
Therefore, $c_1:=\inf_{\|u\|\leq r}I(u)\leq-(\frac{2a}{3b})^\frac{1}{2}\Lambda\mu <0$ is well defined.

\ \ \ \ (ii)\ \ For any $\mu>0$, by Young's inequality, we have
\begin{align}\label{PS-eq.999}
I(u)
&\leq \frac{a}{2}\|u\|^2-\frac{b}{4}\|u\|^4
 + \mu\Big|\int_{\Omega}fudx\Big|  \nonumber\\
&\leq  \frac{a}{2}\|u\|^2-\frac{b}{4}\|u\|^4
 + \frac{a}{2}\|u\|^2 +\frac{\mu^2}{2aS} \|f\|_\frac{2^*}{2^*-1}^2 \nonumber\\
&=a\|u\|^2-\frac{b}{4}\|u\|^4 +\frac{\mu^2}{2aS} \|f\|_\frac{2^*}{2^*-1}^2.
\end{align}
So, there exists
$R=\frac{2}{b}\Big[a+\big(a^2+\frac{b\mu^2}{2aS} \|f\|_\frac{2^*}{2^*-1}^2\big)^\frac{1}{2}\Big]$,
such that $I(u)\leq0$ with $\|u\|\geq R$, thus the conclusion $\sup_{\|u\|\geq R}I(u) \leq0$ can be true.
This completes the proof.
\end{proof}

\subsection{Proof of the first and second solutions of problem \eqref{wt1}}

\begin{theorem} \label{thm-2-solutions}\rm
Assume that $a, b>0$ and $f(x)\in L^\frac{2^*}{2^*-1}(\Omega)$ is a positive function a.e., then, for any $\mu\in(0,\mu_*)$($\mu_*$ is defined by lemma \ref{lem-mountain-local-min}), the problem \eqref{wt1} has at least two positive solutions.
\end{theorem}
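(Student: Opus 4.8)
The plan is to produce the two positive solutions as, respectively, a minimizer of $I$ inside a small ball and a mountain‑pass critical point, and then to upgrade both to positive functions by the strong maximum principle.

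\emph{First solution.} I would obtain it as a global minimizer of $I$ over $\overline{B}_r=\{u\in H_0^1(\Omega):\|u\|\le r\}$ with $r=(2a/3b)^{1/2}$. Although $I$ is not weakly lower semicontinuous on all of $H_0^1(\Omega)$, it is on $\overline{B}_r$: since $r^2=\tfrac{2a}{3b}<\tfrac{a}{b}$, the function $s\mapsto\tfrac{a}{2}s-\tfrac{b}{4}s^2$ is increasing on $[0,r^2]$, so $u\mapsto\tfrac{a}{2}\|u\|^2-\tfrac{b}{4}\|u\|^4$ is an increasing function of the weakly l.s.c. quantity $\|u\|^2$, hence itself weakly l.s.c. on $\overline{B}_r$, while $u\mapsto\mu\int_\Omega fu\,dx$ is weakly continuous. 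As $\overline{B}_r$ is weakly compact, $c_1:=\inf_{\overline{B}_r}I$ is attained at some $u_1$; by Lemma \ref{lem-mountain-local-min}(i), $c_1<0<\rho\le\inf_{\|u\|=r}I$, so $\|u_1\|<r$, $u_1$ is an interior minimizer, $I'(u_1)=0$, and $u_1\ne0$ because $I(u_1)<0=I(0)$. (Equivalently, one may run Ekeland's variational principle on $\overline{B}_r$ to get a $(PS)_{c_1}$ sequence, which converges by Lemma \ref{lem.PSc}; here $c_1\in[-\tfrac{a^2}{12b},0)$, the lower bound coming from $\tfrac{3b}{4}\|u_n\|^4-\tfrac{a}{2}\|u_n\|^2\to c_1$, $\|u_n\|^2\le\tfrac{2a}{3b}$, and $\min_{0\le t\le 2a/3b}(\tfrac{3b}{4}t^2-\tfrac{a}{2}t)=-\tfrac{a^2}{12b}$.)

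\emph{Second solution.} I would apply the Mountain Pass Lemma to $I$, the geometry being given by Lemma \ref{lem-mountain-local-min}: $I(0)=0$, $\inf_{\|u\|=r}I\ge\rho>0$, and $I\to-\infty$ along rays. The crucial point is to keep the minimax level below the ``bad'' value $\tfrac{a^2}{4b}$, where compactness fails. Fix $\tilde u\in\mathcal{D}^+$ and set $g(s)=I(s\tilde u)=\tfrac{a}{2}s^2\|\tilde u\|^2-\tfrac{b}{4}s^4\|\tilde u\|^4-\mu s\int_\Omega f\tilde u\,dx$; since $\int_\Omega f\tilde u\,dx>0$, one has $g(s)\le\tfrac{a}{2}s^2\|\tilde u\|^2-\tfrac{b}{4}s^4\|\tilde u\|^4\le\tfrac{a^2}{4b}$ for all $s\ge0$, with strict inequality at the point where the middle expression attains its maximum over $s\ge0$; as $g(s)\to-\infty$, $\sup_{s\ge0}g(s)$ is attained and strictly less than $\tfrac{a^2}{4b}$. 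Choosing $T$ large enough that $g(T)<0$ and $\|T\tilde u\|>r$, the segment $t\mapsto tT\tilde u$ is an admissible path from $0$ to $T\tilde u$, so the mountain‑pass level $c_2$ satisfies $c_2\le\max_{[0,T]}g<\tfrac{a^2}{4b}$; on the other hand every admissible path crosses $\{\|u\|=r\}$, so $c_2\ge\rho>0$. Thus $c_2\in(0,\tfrac{a^2}{4b})\subset[-\tfrac{a^2}{12b},+\infty)\setminus\{\tfrac{a^2}{4b}\}$, Lemma \ref{lem.PSc} supplies $(PS)_{c_2}$, and the Mountain Pass Lemma yields $u_2$ with $I(u_2)=c_2>0$ and $I'(u_2)=0$. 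Since $I(u_2)=c_2>0>c_1=I(u_1)$, we get $u_2\ne u_1$ and $u_2\ne0$.

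\emph{Positivity and conclusion.} Any critical point $u$ of $I$ satisfies $\langle I'(u),u\rangle=0$, hence $\tfrac{3b}{4}\|u\|^4-\tfrac{a}{2}\|u\|^2=I(u)$. For $u_1$ I use instead the direct bound $\|u_1\|<r$, giving $\|u_1\|^2<\tfrac{2a}{3b}<\tfrac{a}{b}$; for $u_2$ the identity together with $0<c_2<\tfrac{a^2}{4b}$ forces $\|u_2\|^2=\tfrac{a}{3b}\bigl(1+\sqrt{1+12bc_2/a^2}\bigr)\in(\tfrac{2a}{3b},\tfrac{a}{b})$. In either case $a-b\|u_i\|^2>0$, so $u_i$ weakly solves $-\Delta u_i=\tfrac{\mu}{a-b\|u_i\|^2}\,f\ge0$ with $f\not\equiv0$; testing with $\min(u_i,0)$ gives $u_i\ge0$, and $u_i\not\equiv0$, so the strong maximum principle yields $u_i>0$ in $\Omega$. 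Hence \eqref{wt1} has at least two positive solutions for $\mu\in(0,\mu_*)$. The main obstacle is exactly the estimate $c_2<\tfrac{a^2}{4b}$: this is the level at which, by Lemma \ref{lem.PSc}, the $(PS)$ condition breaks down, and it has to be squeezed from the elementary identity $\max_{s\ge0}\bigl(\tfrac{a}{2}s^2\|\tilde u\|^2-\tfrac{b}{4}s^4\|\tilde u\|^4\bigr)=\tfrac{a^2}{4b}$, with strictness produced by the negative linear term coming from $\tilde u\in\mathcal{D}^+$; a secondary subtlety is the sign analysis of $a-b\|u\|^2$, peculiar to this negative‑modulus setting, which is what ultimately makes the maximum principle applicable (and, if one goes through Ekeland, the analogous check $c_1\ge-\tfrac{a^2}{12b}$).
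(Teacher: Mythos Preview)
Your proof is correct and follows the same architecture as the paper: a local minimizer inside $\overline{B}_r$ for the first solution, a mountain-pass critical point for the second, and the strong maximum principle for positivity, with the key compactness input being that $c_1,c_2\in[-\tfrac{a^2}{12b},+\infty)\setminus\{\tfrac{a^2}{4b}\}$ so Lemma~\ref{lem.PSc} applies.

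The technical details you use are a little cleaner than the paper's in three spots. First, for $u_1$ the paper runs Ekeland on $\overline{B}_r$ and invokes Lemma~\ref{lem.PSc}; your direct-minimization argument via weak lower semicontinuity of $s\mapsto\tfrac{a}{2}s-\tfrac{b}{4}s^2$ composed with $\|\cdot\|^2$ on $[0,r^2]$ avoids the $(PS)$ machinery entirely (and your parenthetical Ekeland check $c_1\ge-\tfrac{a^2}{12b}$ is actually something the paper glosses over). Second, for the strict bound $c_2<\tfrac{a^2}{4b}$ the paper uses its constant $\Lambda$, whose definition as a minimum over $\mathcal{D}^+\cup\mathcal{D}^-$ is awkward (the infimum is $0$); your argument along a single ray $s\mapsto s\tilde u$ with $\tilde u\in\mathcal{D}^+$ gives the strict inequality directly from $\int_\Omega f\tilde u\,dx>0$ and is self-contained. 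Third, for positivity the paper asserts $u_i^*\in\mathcal{D}^+$ and deduces $a-b\|u_i^*\|^2>0$ from $\langle I'(u_i^*),u_i^*\rangle=0$; you instead read off $\|u_i\|^2<\tfrac{a}{b}$ from the energy identity $\tfrac{3b}{4}\|u\|^4-\tfrac{a}{2}\|u\|^2=I(u)$ at critical points, which is the same computation as \eqref{shouliana24b} in the $(PS)$ analysis. Both routes lead to $-\Delta u_i\ge0$, $u_i\not\equiv0$, and the strong maximum principle.
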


\begin{proof}[\bf Existence of the first solution]
Taking $B_{r}:=\{u\in  H_0^1(\Omega) \ |\  \|u\| \leq r\}$, where $r=(\frac{2a}{3b})^\frac{1}{2}$.
By the Lemma \ref{lem-mountain-local-min}, there exists $\mu_*>0$ such that
$\inf I({B}_{r})\leq-(\frac{2a}{3b})^\frac{1}{2}\Lambda\mu<0$ for any $\mu\in(0,\mu_*)$. By the Ekeland variational principle (see\cite[Lemma 1.1]{Ivar Ekeland}),
there is a minimizing sequence $\{u_n\}\subset\overline{{B}_{r}}$
such that
\begin{align}\label{ek1}
I(u_n)\leq \inf I({B}_{r})+\frac{1}{n}   \ \ {\rm and}\ \
I(u)\geq I(u_n)-\frac{1}{n}\|u-u_n\|
\end{align}
for all $n\in \mathbb{N}$ and for any $u\in \overline{B_r}$. Therefore, $I(u_n) \to c_1$ and $I'(u_n)\to 0$ in dual space of $ H_0^1(\Omega)$ as $n\to \infty$. By Lemma \ref{lem.PSc}, there exist a subsequence (still denoted by $ \{u_n\}$) and $u_1^*\in B_r$
such that  $u_n \to u_1^*$ as $n\to \infty$.
Then, $I(u_1^*) = c_1<0$ and $I'(u_1^*)=0$. Moreover,  $u_1^*\in \mathcal{D}^+$ and $\|u_1^*\|^2<\frac{a}{3b}$.
This implies that $u_1^*$ is a local minimizer for $I$.
Hence $u_1^*$ is a weak solution of problem \eqref{wt1}.
\end{proof}

\begin{proof}[\bf Existence of the second solution]

For any $\bar{u}\in\mathcal{D}^+$,
there exists $\theta\in(0,1)$, such that
\begin{align*}
\sup I(\bar{u})
&=\sup\Big\{\frac{a}{2}\|\bar{u}\|^2-\frac{b}{4}\|\bar{u}\|^4 -\mu \int_{\Omega} f\bar{u}dx\Big\}\\
&\leq\sup\Big\{\frac{a}{2}\|\bar{u}\|^2-\frac{b}{4}\|\bar{u}\|^4 -\mu \Lambda\|\bar{u}\|\Big\}\\
&\leq \frac{a^2}{4b} -\theta (\frac{a}{b})^\frac{1}{2}\mu \Lambda
<\frac{a^2}{4b}.
\end{align*}
According to Lemma \ref{lem-mountain-local-min}, the functional $I$ has mountain pass geometry for any $\mu\in(0,\mu_*)$.
For any $e\in  H_0^1(\Omega)$ with  $\|e\| \geq R\}$(where  $R$ defined in the lemma \ref{lem-mountain-local-min}), we set
$$
\Gamma=\Big\{\tau(t)\in C^1\big([0, 1],
 H_0^1(\Omega)\big); \tau(0)=0, \tau(1)=e\Big\}.
$$
By \eqref{e-fun}--\eqref{d-e-fun}, $I\big(\tau(t)\big)$ has continuity, $I\big(\tau(0)\big)=0$, $I\big(\tau(1)\big)\leq 0$.
So, there is
\[
\displaystyle 0<\rho \leq c_2:=\inf_{\tau\in \Gamma} \sup_{t\in[0,1]} I\big(\tau(t)\big)
\leq \sup_{u\in\mathcal{D}^+} I(u)<\frac{a^2}{4b}.
\]
By Lemma \ref{lem.PSc} and the mountain pass theorem
(see \cite[Theorem 2.1--2.4]{Ambrosetti and Rabinowitz}), there exist $u_2^*$ and a sequence $\{u_k\}$
in $ H_0^1(\Omega)$, moreover in  $\mathcal{D}^+$, such that $u_k\to u_2^*$ in
$ H_0^1(\Omega)$, $I(u_k) \to c_2=I(u_2^*)$ and
$I'(u_k) \to 0=I'(u_2^*)$ in dual space of $ H_0^1(\Omega)$.
Hence $u_2^*$ is a weak solution of problem \eqref{wt1} with
$\|u_2^*\|>(\frac{2a}{3b})^\frac{1}{2}$. Since
$I(u_1^*)<0<I(u_2^*)$, we get $u_2^*\neq u_1^*$.
\end{proof}

\begin{proof}[\bf Proof of that ${u_1^*}$ and ${u_2^*}$ are positive]
Since ${u_i^*}\in \mathcal{D}^+(i=1,2)$ are the weak solutions of  problem \eqref{wt1},
we have
$$
\Big(a-b\int_{\Omega}|\nabla u_i^*|^2dx \Big)\int_{\Omega}|\nabla u_i^*|^2dx=\mu\int_{\Omega}fu_i^*dx>0.
$$
Hence $a-b\|u_i^*\|^2>0$. This means that  $-\Delta u_i^*= \mu(a-b\|u_i^*\|^2)^{-1}f(x)\geq0$ and $u_i^*\not\equiv0$.
According to the strong maximum principle, we obtain $u_i^*$ are positive solutions.
\end{proof}

\subsection{Proof of the third solution of problem \eqref{wt1}}

\begin{lemma} \label{lem-infinity2}
Assume that $a, b,\mu>0$, $f\in L^\frac{2^*}{2^*-1}(\Omega)$  and $f(x)\geq0$ a.e., it holds that
\begin{align*}%\label{ShuangXiangBuDeng}
\frac{a^2}{4b}
< \sup I(u) \leq\frac{a^2}{b} +\frac{\mu^2}{2aS} \|f\|_\frac{2^*}{2^*-1}^2.
\end{align*}
\end{lemma}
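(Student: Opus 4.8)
The plan is to prove the two inequalities separately, each by an elementary one–variable optimization, and then combine them. Nothing deep is needed; the work is purely computational.

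For the upper estimate I would start from the bound \eqref{PS-eq.999} already derived in the proof of Lemma \ref{lem-mountain-local-min}, namely
\[
I(u)\leq a\|u\|^2-\frac{b}{4}\|u\|^4+\frac{\mu^2}{2aS}\|f\|_\frac{2^*}{2^*-1}^2 ,\qquad \forall\,u\in H_0^1(\Omega).
\]
Putting $s=\|u\|^2\geq 0$ and maximizing the quadratic $s\mapsto as-\tfrac{b}{4}s^2$ over $[0,+\infty)$ — the maximum being attained at $s=2a/b$ with value $a^2/b$ — immediately gives $I(u)\leq \tfrac{a^2}{b}+\tfrac{\mu^2}{2aS}\|f\|_\frac{2^*}{2^*-1}^2$ for every $u$, hence the same bound for $\sup I$. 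In particular $\sup I$ is finite.

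For the lower estimate I would use the cone $\mathcal{D}^-$. Since $f\geq 0$ a.e. and $f\not\equiv 0$, $\mathcal{D}^-\neq\emptyset$ (for instance $-\varphi\in\mathcal{D}^-$ for any nonzero $\varphi\in H_0^1(\Omega)$ with $\varphi\geq 0$). Fix $v\in\mathcal{D}^-$, so $\int_\Omega fv\,dx<0$, and look at $I$ along the ray $t\mapsto tv$ with $t>0$:
\[
I(tv)=\frac{a}{2}t^2\|v\|^2-\frac{b}{4}t^4\|v\|^4-\mu t\int_\Omega fv\,dx .
\]
Choosing $t_0>0$ so that $t_0^2\|v\|^2=a/b$ makes the sum of the first two terms equal to $\tfrac{a^2}{4b}$, while the remaining term $-\mu t_0\int_\Omega fv\,dx$ is strictly positive because $t_0>0$, $\mu>0$ and $\int_\Omega fv\,dx<0$. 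Therefore $\sup I\geq I(t_0 v)=\tfrac{a^2}{4b}-\mu t_0\int_\Omega fv\,dx>\tfrac{a^2}{4b}$, which is exactly the required strict inequality.

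I do not expect any genuine obstacle here; the only points meriting a word of care are (a) recording that $\mathcal{D}^-\neq\emptyset$, which follows from the positivity of $f$, and (b) noting that dilating $v$ by a positive scalar preserves the sign of $\int_\Omega fv\,dx$, so that $t_0 v$ legitimately produces a value of $I$ strictly above $a^2/(4b)$. Concatenating the upper and lower estimates finishes the proof.
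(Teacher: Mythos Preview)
Your proposal is correct and follows essentially the same route as the paper. Both bounds are obtained exactly as you describe: the upper bound by invoking \eqref{PS-eq.999} and maximizing the resulting one-variable polynomial (the paper writes this as $\max_{t>0}\{at^2-\tfrac{b}{4}t^4+\text{const}\}$), and the lower bound by evaluating $I$ at a point with $\|\cdot\|^2=a/b$ lying in $\mathcal{D}^-$ --- the paper reaches that point as $tu$ with $u\in\mathcal{D}^+$ and $t=-(a/b)^{1/2}\|u\|^{-1}$, while you reach it as $t_0v$ with $v\in\mathcal{D}^-$ and $t_0>0$, which is the same computation up to a sign convention.
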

\begin{proof}[\bf Proof]
For any $u\in \mathcal{D}^+$, $tu\in \mathcal{D}^-$ with $t<0$ and we have
\begin{align}\label{PS-eq.666}
\sup_{t<0} I(tu)
&\geq\sup_{t=-(\frac{a}{b})^\frac{1}{2}\|u_0\|^{-1}}\left\{ \frac{a}{2}\|tu\|^2-\frac{b}{4}\|tu\|^4- \mu\int_{\Omega}f\cdot(tu)dx \right\}   \nonumber \\
&=\frac{a^2}{4b}+(\frac{a}{b})^\frac{1}{2}\frac{\mu}{\|u\|}\left|\int_{\Omega}fu_0dx \right|\nonumber \\
&\geq \frac{a^2}{4b}+\mu\Lambda(\frac{a}{b})^\frac{1}{2}
> \frac{a^2}{4b}.
\end{align}
Therefore, $\sup I(u)\geq\sup_{u\in \mathcal{D}^-} I(u)>\frac{a^2}{4b}$. Moreover,
via the Young's inequality we obtain that
\begin{align*}%\label{PS-eq.999}
\sup I(u)
&\leq \frac{a}{2}\|u\|^2-\frac{b}{4}\|u\|^4
 + \mu\Big|\int_{\Omega}fudx\Big|  \nonumber\\
&\leq  \frac{a}{2}\|u\|^2-\frac{b}{4}\|u\|^4
 + \frac{a}{2}\|u\|^2 +\frac{\mu^2}{2aS} \|f\|_\frac{2^*}{2^*-1}^2 \nonumber\\
&\leq \max_{t>0}\Big\{at^2-\frac{b}{4}t^4 +\frac{\mu^2}{2aS} \|f\|_\frac{2^*}{2^*-1}^2\Big\}\nonumber\\
&=\frac{a^2}{b} +\frac{\mu^2}{2aS} \|f\|_\frac{2^*}{2^*-1}^2.
\end{align*}
Consequently, $\sup I(u) \leq\frac{a^2}{b} +\frac{\mu^2}{2aS} \|f\|_\frac{2^*}{2^*-1}^2$.
This  with the \eqref{PS-eq.666}, our proof is complete.
\end{proof}

\begin{theorem} \label{thm-3th-solution}
Assume that $a, b>0$ and $f(x)\in L^\frac{2^*}{2^*-1}(\Omega)$ is a positive function a.e., then, for any $\mu>0$, the problem \eqref{wt1} has at least a negative solution.
\end{theorem}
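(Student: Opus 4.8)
The plan is to realize the third solution as a critical point of $I$ at its global maximum level, which Lemma~\ref{lem-infinity2} already pins between $\frac{a^2}{4b}$ and $\frac{a^2}{b}+\frac{\mu^2}{2aS}\|f\|_{\frac{2^*}{2^*-1}}^2$. Write $M:=\sup_{u\in H_0^1(\Omega)}I(u)$, so that $\frac{a^2}{4b}<M<+\infty$. Two features of this level are decisive: first, $M<+\infty$ means $-I$ is bounded below on $H_0^1(\Omega)$, so the Ekeland variational principle applies to $-I$; second, $M$ lies in $\big[-\frac{a^2}{12b},+\infty\big)\setminus\{\frac{a^2}{4b}\}$, which is exactly the range in which Lemma~\ref{lem.PSc} grants the $(PS)$ condition.

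First I would apply the Ekeland variational principle to the continuous, bounded-below functional $-I$ on the complete space $H_0^1(\Omega)$ to obtain a sequence $\{u_n\}$ with $-I(u_n)\to\inf(-I)=-M$ and $(-I)'(u_n)\to0$, that is, a $(PS)_M$ sequence for $I$. Because $M\neq\frac{a^2}{4b}$, Lemma~\ref{lem.PSc} yields a subsequence with $u_n\to u_3^*$ in $H_0^1(\Omega)$; by continuity of $I$ and $I'$ we get $I(u_3^*)=M$ and $I'(u_3^*)=0$, so $u_3^*$ is a weak solution of~\eqref{wt1}, and $u_3^*\neq0$ since $I(u_3^*)=M>\frac{a^2}{4b}>0=I(0)$. (One may equally phrase this as a minimax argument over the trivial class; I route it through Ekeland since that principle is already in use in the paper.)

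It remains to check that $u_3^*$ is negative, and this is the one place where the structure of the problem enters. The elementary bound $\frac a2 t-\frac b4 t^2\le\frac{a^2}{4b}$ (with $t=\|u\|^2$) gives $I(u)\le\frac{a^2}{4b}-\mu\int_\Omega fu\,dx$ for every $u\in H_0^1(\Omega)$; applying this to $u_3^*$ and using $I(u_3^*)=M>\frac{a^2}{4b}$ together with $\mu>0$ forces $\int_\Omega fu_3^*\,dx<0$, i.e. $u_3^*\in\mathcal{D}^-$. Testing the weak formulation with $v=u_3^*$ then gives $\big(a-b\|u_3^*\|^2\big)\|u_3^*\|^2=\mu\int_\Omega fu_3^*\,dx<0$, so that $a-b\|u_3^*\|^2<0$ since $\|u_3^*\|>0$. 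Consequently $-\Delta u_3^*=\mu\big(a-b\|u_3^*\|^2\big)^{-1}f\le0$ a.e. in $\Omega$, with $u_3^*=0$ on $\partial\Omega$ and $u_3^*\not\equiv0$; the strong maximum principle then forces $u_3^*<0$ in $\Omega$. Since the two solutions constructed earlier are positive, this $u_3^*$ is a genuinely new, negative solution.

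As for the main obstacle: granting Lemmas~\ref{lem.PSc} and~\ref{lem-infinity2} this argument is short, so the difficulty really sits in those lemmas — above all in the \emph{strict} inequality $\frac{a^2}{4b}<\sup I$, which is precisely what keeps the maximum level off the single value $\frac{a^2}{4b}$ at which $(PS)$ fails, and in the global upper bound on $I$ that makes $-I$ bounded below. The only structural subtlety belonging to this step is the remark that high energy automatically pushes the solution into $\mathcal{D}^-$ and thereby makes the effective coefficient $a-b\|u\|^2$ negative, which is exactly what turns the critical point into a subharmonic, hence negative, function.
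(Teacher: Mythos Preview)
Your argument is correct and is precisely the alternative route the paper itself sketches immediately after its main proof: apply Ekeland's principle to $-I$ (the paper suggests doing this on the ball $B_R$ from Lemma~\ref{lem-mountain-local-min}(ii), you do it on all of $H_0^1(\Omega)$, which is fine since $-I$ is globally bounded below by Lemma~\ref{lem-infinity2}), then invoke Lemma~\ref{lem.PSc} at the level $M>\frac{a^2}{4b}$. The paper's \emph{primary} proof instead uses a minimax scheme over a one-parameter family $\mathcal{A}=\{tu:u\in\mathcal{D}^+\cup\mathcal{D}^-,\ t\in\mathbb{R}\}$ to define $c_3$ and cites a general minimax theorem; your Ekeland route is shorter and avoids that machinery. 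Your negativity argument is also slightly cleaner than the paper's: you deduce $u_3^*\in\mathcal{D}^-$ directly from the inequality $I(u)\le\frac{a^2}{4b}-\mu\int_\Omega fu\,dx$ and $I(u_3^*)>\frac{a^2}{4b}$, whereas the paper takes $u_3^*\in\mathcal{D}^-$ as a byproduct of the minimax structure; thereafter both proofs coincide.
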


\begin{proof}[\bf Existence of the third solution]

From Lemma \ref{lem-infinity2}, the functional $I$ has the supremum. Set
$$
\mathcal{F}=\Big\{T_t\in C^1\big( H_0^1(\Omega),  H_0^1(\Omega)\big); T_t(u)=tu, t\in\mathbb{R}\Big\},
$$
$$
\mathcal{A}=\Big\{tu; u\in\mathcal{D}^+\cup\mathcal{D}^-, t\in\mathbb{R}\Big\},
$$
Then, for all $A\in\mathcal{A}$,  $T_t(A)\in\mathcal{A}$ are hold for any $T_t\in\mathcal{F}$.
Therefore, for any $\mu>0$, there is
\[
\displaystyle \frac{a^2}{4b}<\inf_{A\in\mathcal{A}} \max_{t\in\mathbb{R}}\sup_{\tilde{u}\in\mathcal{A}} I(t\tilde{u}):=c_3
\leq \frac{a^2}{b} +\frac{\mu^2}{2aS} \|f\|_\frac{2^*}{2^*-1}^2.
\]
By Lemma \ref{lem.PSc} and applying the Minimax principle
(see \cite[Theorem 7.3.1]{Lu WD} and \cite[Theorem 1.5 \& Corollary 1.3 in Chapter 3]{Zhang GQ})
 for $I$, there exist $u_3^*$ and a sequence $\{u_m\}$
in $ H_0^1(\Omega)$, moreover in  $\mathcal{D}^-$, such that $u_m\to u_3^*$ in
$ H_0^1(\Omega)$, $I(u_m) \to c_3=I(u_3^*)$ and
$I'(u_m) \to 0=I'(u_3^*)$ in dual space of $ H_0^1(\Omega)$.
Hence $u_3^*$ is a weak solution of problem \eqref{wt1} and $\|u_3^*\|^2>\frac{a}{b}$.
\end{proof}

Indeed, instead of the Minimax principle, taking $B_{R}:=\{u\in  H_0^1(\Omega) \ |\  \|u\| \leq R\}$ and applying the Ekeland variational principle for $-I$, we can obtain the existence of $u_3^*$ by Lemma \ref{lem.PSc}, where
$R=\frac{2}{b}\big[a+ (a^2+\frac{b\mu^2}{2aS} \|f\|_\frac{2^*}{2^*-1}^2 )^\frac{1}{2}\big]$.

\begin{proof}[\bf Proof of  $u_3^*$ is negative]
Since ${u_3^*}\in \mathcal{D}^-$ is a weak solution of  problem \eqref{wt1}with  $\|u_3^*\|^2>\frac{a}{b}$,
we have $a-b\|u_3^*\|^2<0$ and
$$
\mu\int_{\Omega}f u_3^*dx=\Big(a-b\int_{\Omega}|\nabla u_3^*|^2dx \Big)\int_{\Omega}|\nabla u_3^*|^2dx<0.
$$
Hence $u_3^*\not\equiv0$ and

$$-\Delta u_3^*=\mu (a-b\|u_3^*\|^2)^{-1}f(x)\leq0.$$
By the strong minimum principle, we obtain that $u_3^*$ is negative.
This proof is complete.
\end{proof}

\subsection{Proof of the Theorem \ref{thm1.1}}

It is clear that problem \eqref{wt1} has at least two positive solutions  $u_1^*$ and $u_2^*$ for $\mu\in(0,\mu_{*1}]$  by the Theorem \ref{thm-2-solutions} and at least a negative solution  $u_3^*$ by the Theorem \ref{thm-3th-solution}.
Since
$$
I(u_1^*)<0<I(u_2^*)<\frac{a^2}{4b}<I(u_3^*),$$
 we get that $u_1^*$, $u_2^*$ and $u_3^*$ are different solutions and $0<\|u_1^*\|^2<\frac{a}{3b}<\frac{2a}{3b}<\|u_2^*\|^2<\frac{a}{b}<\|u_3^*\|^2$.
For  $\mu\in[\mu_{*1},+\infty)$, the Theorem \ref{thm-3th-solution} leads to that problem \eqref{wt1} owns at least a negative solution.
Hence there exists $\mu_*>0$ such that the problem \eqref{wt1} has at least three nontrivial solutions for $\mu\in(0,\mu_{*1})$,  and a nontrivial solution for  $\mu\in[\mu_{*1},+\infty)$.

\vspace{5pt}
Moreover, for $\|u\|^2=\frac{4a}{3b}$ and
$ \mu\leq\frac{a}{72b}(3abS)^\frac{1}{2}\|f\|_{\frac{2^*}{2^*-1}}^{-1}$,
we can obtain that $I(u)\leq\frac{a^2}{4b}$, $\sup I(u)$ is achieved on $\frac{a}{b}<\|u\|^2<\frac{4a}{3b}$.
Consequently, there exists $\mu_*=\frac{a}{72b}(3abS)^\frac{1}{2}\|f\|_{\frac{2^*}{2^*-1}}^{-1}<\mu_{*1}$,
such that, problem \eqref{wt1} has at least three nontrivial solutions  $u_1^*$, $u_2^*$  and $u_3^*$ for $\mu\in(0,\mu_*)$, and a nontrivial solution  $u_3^*$ for $\mu\geq\mu_*$. In addition, it holds that $\frac{a}{b}<\|u_3^*\|^2<\frac{4a}{3b}$ for $\mu\in(0,\mu_*)$.
This proof is completed.

\section{Proof of main results via algebraic analysis}
\textbf{Step 1}. Let $u$ be a solution, we give the calculated 形容词  method to other solutions with the help of algebraic analysis.
Since $u$ is a solution of problem \eqref{wt1}, one has
\begin{align}\label{ruojieDYy}
\big(a-b\|u\|^2 \big)\int_{\Omega}\nabla u \nabla vdx=\mu\int_{\Omega}f(x)vdx,
 ~ \forall ~ v\in  H_0^1(\Omega).
\end{align}
For given $u$, we have $\|u\|^2=\int_{\Omega}|\nabla u|^2dx:=\alpha>0$ and it is easy to see that $\alpha\neq\frac{a}{b}$. The existence of three solutions via algebraic analysis, if and only if
there exist three values of $t$ such that $tu\in H_0^1(\Omega)$ and
\begin{align}\label{ruojie2}
\big(a-b\|tu\|^2 \big)\int_{\Omega}\nabla (tu) \nabla vdx=\mu\int_{\Omega}f(x)vdx,
 ~ \forall ~ v\in  H_0^1(\Omega).
\end{align}
It follows from \eqref{ruojieDYy}--\eqref{ruojie2} that our goal is equivalent to finding all $t\in\mathbb{R}$ such that
\begin{align}\label{jiedehanshu2}
t\big(a-bt^2\alpha\big)=a-b\alpha.
\end{align}
It is easy to see that $t_0=1$ is a solution of Eq.\eqref{jiedehanshu2}, and for extra, Eq.\eqref{jiedehanshu2} has the solutions $t_1=\frac{1}{2}\Big(-1+\sqrt{\frac{4a}{b\alpha}-3}\Big)$ and $t_2=\frac{1}{2}\Big(-1-\sqrt{\frac{4a}{b\alpha}-3}\Big)$ when $\alpha<\frac{4a}{3b}$, $t_1=t_2=-\frac{1}{2}$ when $\alpha=\frac{4a}{3b}$ and no real solution when $\alpha>\frac{4a}{3b}$.
There is $t_1=t_0=1$ and $t_2=-2$ when $\alpha=\frac{a}{3b}$ in addition.

\vspace{5pt}
We set $u,\bar{u}$ are different solutions of problem \eqref{wt1} with $\mu\neq0$, then,
\begin{equation*}
%\left\{\begin{array}{lc}
\displaystyle
-\big(a-b\|u\|^2 \big)\Delta u =\mu f(x)=%-\Delta \big[(a-b\|u\|^2)u\big],\\[14pt]  \displaystyle
-\big(a-b\|\bar{u}\|^2 \big)\Delta \bar{u}.%=\mu f(x)=-\Delta \big[(a-b\|\bar{u}\|^2)\bar{u}\big].
%\end{array}\right.
\end{equation*}
It holds that
 $a-b\|u\|^2\neq0$ and $a-b\|\bar{u}\|^2\neq0$ are constants by \eqref{ruojieDYy}. Consciously,
\begin{equation}\label{u=uw}
\displaystyle
-(a-b\|u\|^2)\Delta u+(a-b\|\bar{u}\|^2)\Delta\bar{u}=0,
\end{equation}
and $\big[(a-b\|u\|^2)u-(a-b\|\bar{u}\|^2)\bar{u}\big]\in H_0^1(\Omega)$
by $u,\bar{u}\in H_0^1(\Omega)$.
Multiplying the equation \eqref{u=uw} by $\big[(a-b\|u\|^2)u-(a-b\|\bar{u}\|^2)\bar{u}\big]$ and integrating over $\Omega$, we obtain that
\begin{equation*}
\Big\|(a-b\|u\|^2)u-(a-b\|\bar{u}\|^2)\bar{u}\Big\|^2=0.
\end{equation*}
Hence $u=\frac{a-b\|\bar{u}\|^2}{a-b\|u\|^2} \bar{u}$ and
all solutions of problem \eqref{wt1} are linear dependence.

\vspace{5pt}
\textbf{Step 2}. We give the proof of three solutions with  $\mu>0$ enough small only by using one of the
Mountain Pass Lemma, Ekeland variational principle, and Minimax principle.
\begin{proof}[\bf Three solutions of Theorem \ref{thm1.1}]
According to the information above,  \eqref{wt1} has at least a nontrivial solution
(denote by $u_1^*$, $u_2^*$ or $u_3^*$) with all $a,b>0$ and $\mu\in(0,\mu_*)$ by using  one of the
Mountain Pass Lemma, Ekeland variational principle, and Minimax principle.
Moreover, one has
$$
0<\|u_1^*\|^2<\frac{a}{3b}<\frac{2a}{3b}<\|u_2^*\|^2<\frac{a}{b}<\|u_3^*\|^2<\frac{4a}{3b}.
$$
Choosing one of  $u_1^*$, $u_2^*$ and $u_3^*$ denoted by $u$,
then, Eq.\eqref{jiedehanshu2} has three different solutions $t_0,t_1,t_2$ and
problem \eqref{wt1} has three different solutions $u,t_1u,t_2u$, them are linear dependence.
\end{proof}

\begin{remark}\label{zhujiM}\rm
From the step 1, we obtain that problem \eqref{wt1} has at most three different solutions.
Where we proved problem \eqref{wt1} has at least 3 solutions with $\mu>0$ enough small  (see  Theorem (1.1)),
therefore, the problem \eqref{wt1} has only three solutions with
$$
\Big\{u_1^*,u_2^*,u_3^*\Big\}=\Big\{u_1^*,t_1u_1^*,t_2u_1^*\Big\}
=\Big\{u_2^*,t_1u_2^*,t_2u_2^*\Big\}=\Big\{u_3^*,t_1u_3^*,t_2u_3^*\Big\}.
$$
\end{remark}

\textbf{Step 3}. The specific form of $\mu_{**}$ is given exactly. And the problem \eqref{wt1} has only three  solutions for $0<|\mu|<\mu_{**}$,  two solutions for $|\mu|=\mu_{**}$ and a solution for any $|\mu|>\mu_{**}$.

\begin{proof}[\bf Proof of Theorem \ref{thm1.2} with $\mu\neq0$]
Research the following elliptic problem:
\begin{equation}\label{WenTi2}
\left\{
\begin{array}{lc}\displaystyle
-\Delta u= f(x) , &\text{ in } \Omega,\\[10pt]
u=0, &\text{ in } \partial\Omega,
\end{array}
\right.
\end{equation}
where constant $a>0$, $f\in L^\frac{2^*}{2^*-1}(\Omega)$ and $f(x)\geq0$ a.e..
Then, problem \eqref{WenTi2} has an unique positive solution $U\in  H_0^1(\Omega)$.
Moreover, $\int_\Omega \nabla U\nabla vdx=\int_\Omega fvdx$ for all $v\in  H_0^1(\Omega)$ and
\begin{equation}\label{mu-GuJi}
\left\{
\begin{array}{lc}\displaystyle
\|U\|^2=\int_\Omega fUdx\leq \frac{1}{\sqrt{S}}\|f\|_\frac{2^*}{2^*-1}\|U\|\Longrightarrow
\|U\|^{-1}\geq \sqrt{S}\|f\|_\frac{2^*}{2^*-1}^{-1},\\[8pt] \displaystyle
\|U\|^2=\int_\Omega fUdx\leq \|f\|_2\|U\|_2\leq\frac{1}{\lambda_1}\|f\|_2\|U\|\Longrightarrow
\|U\|^{-1}\geq \lambda_1\|f\|_2^{-1}.
\end{array}
\right.
\end{equation}
Consider the function $g(t)$ as following:
\begin{equation}\label{FangC-t}  \displaystyle
g(t)=\big(a-b\|tU\|^2\big)t-\mu, %=at-bt^3\int_\Omega fUdx-\mu
\end{equation}
where $t\in \mathbb{R}$ is variable, $\mu>0$ is a parameter. If $t=T$ is a zero-point of function $g(t)$, then,
\begin{align*}
\Big(a- b \int_{\Omega}|\nabla (TU)|^2dx \Big)\int_{\Omega}\nabla (TU) \nabla vdx=\mu\int_{\Omega}fvdx, ~\forall~v\in  H_0^1(\Omega).
\end{align*}
This means that $TU$ is a solution of problem \eqref{wt1}.
Based on the $g'(t)=a-3b\|U\|^2 t^2$, we get that the zero-points of $g
(t)$ are $t_m=-\sqrt{3ab}(3b\|U\|)^{-1}$ and $t_M=\sqrt{3ab}(3b\|U\|)^{-1}$.
Therefore, $g(t)$ is monotonous decreasing along $(-\infty,t_m)$, increasing along $[t_m,t_M]$ and decreasing along $(t_M,+\infty)$,
\begin{equation}\label{gMm}
\left\{
\begin{array}{lc}\displaystyle
\min\,g(t)=g(t_m)=\Big(b\|U\|^2\cdot\frac{a}{3b\|U\|^2}-a\Big)\cdot\frac{\sqrt{3ab}}{3b\|U\|}-\mu
=-\frac{2a\sqrt{3ab}}{9b\|U\|}-\mu,\\[12pt] \displaystyle
\max\,g(t)=g(t_M)=\Big(a-b\|U\|^2\cdot\frac{a}{3b\|U\|^2}\Big)\cdot\frac{\sqrt{3ab}}{3b\|U\|}-\mu
=\frac{2a\sqrt{3ab}}{9b\|U\|}-\mu.
\end{array}
\right.
\end{equation}
Consciously, by \eqref{gMm}, it is easy to get that, there exists $\mu_{**}=2a\sqrt{3ab}(9b\|U\|)^{-1}$
such that Eq.\eqref{FangC-t} has three solutions $T_1,T_2,T_3$ for $0<|\mu|<\mu_{**}$ with
$$
T_1<-\sqrt{3ab}(3b\|U\|)^{-1}<T_2<\sqrt{3ab}(3b\|U\|)^{-1}<T_3,
$$
two solutions $T_2$ and `$T_1$ or $T_3$' for $|\mu|=\mu_{**}$ with
$$
T_1=-2\sqrt{3ab}(3b\|U\|)^{-1}<T_2=\sqrt{3ab}(3b\|U\|)^{-1}
\text{~or~}
T_2=-\sqrt{3ab}(3b\|U\|)^{-1}<T_3=2\sqrt{3ab}(3b\|U\|)^{-1},
$$
and a solution $T_1$ or $T_3$ for $|\mu|>\mu_{**}$ with
$$
T_1<-\sqrt{3ab}(3b\|U\|)^{-1}
\text{~or~}
T_3>\sqrt{3ab}(3b\|U\|)^{-1}.
$$
Besides, we can state that problem \eqref{wt1} has three solutions `$T_1U,T_2U,T_3U$' for $0<|\mu|<\mu_{**}$,
two solutions `$-2\sqrt{3ab}(3b\|U\|)^{-1}U$, $\sqrt{3ab}(3b\|U\|)^{-1}U$' for $\mu=\mu_{**}$, two solutions `$-\sqrt{3ab}(3b\|U\|)^{-1}U$, $2\sqrt{3ab}(3b\|U\|)^{-1}U$' for $\mu=-\mu_{**}$,
a solution `$T_1U$' for $\mu>\mu_{**}$ and a solution `$T_3U$' for $\mu<-\mu_{**}$,
where $U\in  H_0^1(\Omega)$ is the unique positive solution of problem \eqref{WenTi2} and  $\mu_{**}=2a\sqrt{3ab}(9b\|U\|)^{-1}$.

\vspace{5pt}

Compared with the Step 1, all solutions of problem \eqref{wt1} are linear dependence, there are no more solutions than those mentioned above.
Hence, there exists  $\mu_{**}=2a\sqrt{3ab}(9b\|U\|)^{-1}$  such that problem \eqref{wt1} has only three solutions for $0<|\mu|<\mu_{**}$, two solutions for $\mu=\pm\mu_{**}$ and a solution for $|\mu|>\mu_{**}$.
And by \eqref{mu-GuJi}, $\mu_{**}=2a\sqrt{3ab}(9b\|U\|)^{-1}\geq 2a\sqrt{3abS}(9b\|f\|_\frac{2^*}{2^*-1})^{-1}$.
\end{proof}

At last, we prove the existence of infinitely many solutions of problem \eqref{wt1} for $\mu=0$.

\begin{proof}[\bf Proof of Theorem \ref{thm1.2} with $\mu=0$]
For any  $u\in  H_0^1(\Omega)$, let $V:=\sqrt{ab}(b\|u\|)^{-1}u\in  H_0^1(\Omega)$,  it is clear that
$a-b\|V\|^2=0$ and $\int_\Omega\nabla V\nabla vdx$ are bounded with all $v\in  H_0^1(\Omega)$.
So, $V$ is a solution of problem \eqref{wt1}. According to the arbitrary of $u\in  H_0^1(\Omega)$,
we get that problem \eqref{wt1} has infinitely many solutions when $\mu=0$.
\end{proof}

We replaced the condition of $f(x)$ before by $f(x)\in L^2(\Omega)$ and $f(x)>0$ a.e., we can obtain that the conclusion in Corollary \ref{thm1.3} is clearly due to \eqref{mu-GuJi} and the progress of step 3,
where $U\in  H_0^1(\Omega)$ is the unique positive solution of problem \eqref{WenTi2} and $\mu_{**}=2a\sqrt{3ab}(9b\|U\|)^{-1}\geq 2a\lambda_1\sqrt{3ab}(9b\|f\|_2)^{-1}$.

\subsection*{Acknowledgments}
The authors would like to thank the anonymous reviewers and the editors.
% for their valuable comments and suggestions, which led to an improvement of the original manuscript.
We would like to thank Professor Wei Wei from Guizhou Education University too, because she has given us many suggestions for the layout of our paper.

This research was supported by the National Natural Science Foundation of China [11861021, 11661021].

\subsection*{Data Availability Statement}
Data sharing is not applicable to this article as no new data were created or analyzed in this study, and all data, methods and results have been stated before.

%\subsection*{Authors' contributions}
%Y.Wang completed this study and wrote the manuscript, H.M.Suo provided the necessary conditions such as financial support, all authors checked and completed the modification of computation and grammar. All authors read and approved the final manuscript.

\end{document}